\definecolor{refkey}{rgb}{0.65,0.38,0.15}
\numberwithin{equation}{section}
  \newtheorem{theorem}{Theorem}[section]
 \newtheorem{corollary}[theorem]{Corollary}
 \newtheorem{lemma}[theorem]{Lemma}
\theoremstyle{definition}
 \newtheorem{definition}[theorem]{Definition}
 \newtheorem{notation}[theorem]{Notation}
\newcommand{\N}{\ensuremath{\mathbb N}} %natural numbers
\newcommand{\R}{\ensuremath{\mathbb R}} %real numbers
\newcommand{\beq}{\begin{equation}}
\newcommand{\eeq}{\end{equation}}
\newcommand{\be}{\begin{enumerate}}
\newcommand{\ee}{\end{enumerate}}
\newcommand{\bi}{\begin{itemize}}
\newcommand{\ei}{\end{itemize}}
\DeclareMathOperator{\lip}{lip\kern-0.8pt}
\DeclareMathOperator{\Lip}{Lip\kern-0.8pt}
\definecolor{skyblue}{rgb}{0,0.4,0.6}
\definecolor{red}{rgb}{0.6,0,0}
\definecolor{green}{rgb}{0,0.6,0}
\definecolor{aquam}{rgb}{0.5,1.0,1.0}
\definecolor{bbrown}{rgb}{0.75,0.38,0.15}
\definecolor{Cyan}{rgb}{0,0.6,0.6}
\definecolor{Darkblue}{rgb}{0,0,0.9}
\definecolor{Dodgerblue2}{rgb}{0,0.5,1}
\definecolor{Green}{rgb}{0,0.5,0.1}
\definecolor{dGreen}{rgb}{0,0.5,0}
\definecolor{Kahki}{rgb}{1,1,0.5}
\definecolor{Magenta}{rgb}{1,0,1}
\definecolor{bMagenta}{rgb}{1,.6,1}
\definecolor{Orange}{rgb}{0.8,0.3,0}
\definecolor{dOrchid}{rgb}{0.7,0.2,0.4}
\definecolor{Orchid}{rgb}{1,0.5,1}
\definecolor{Purple}{rgb}{0.65,0.07,0.85}
\definecolor{Royalblue}{rgb}{0.6,0.85,0.87}
\definecolor{Tan}{rgb}{0.54,0.42,0.23}
\definecolor{bTan}{rgb}{0.94,0.82,0.63}
\definecolor{Turquoise}{rgb}{0,0.85,0.87}
\definecolor{Yellow}{rgb}{1,1,0}
\definecolor{bYellow}{rgb}{1,1,0.6}
\definecolor{bRed}{rgb}{1,0.7,0.7}
\definecolor{dRed}{rgb}{0.7,0,0}
\definecolor{dRed}{rgb}{1,0,0}
\definecolor{boxcolb}{rgb}{0.87,0.77,0.75}%rosybrown
\definecolor{boxcol}{rgb}{0.6,0.85,0.87}%cadetblue
\definecolor{boxcolgreen}{rgb}{0.64,0.93,0.79}
\definecolor{boxcolaa}{rgb}{.75,.99,.70}
\definecolor{boxcolbb}{rgb}{0.39,0.50,0.56}
\definecolor{boxcolcc}{rgb}{1,0.81,0.65}
\definecolor{yy}{rgb}{0.43,0.21,.18}
\definecolor{gA}{gray}{0.5}
\definecolor{gB}{gray}{0.8}
\definecolor{gC}{gray}{0.9}
\title{Characterization of $\lip$ sets}
\author{Zolt\'an Buczolich\thanks{\scriptsize
This author was supported by the Hungarian National Research, Development and Innovation Office--NKFIH, Grant 124003.
},
Department of Analysis, ELTE E\"otv\"os Lor\'and\\
University, P\'azm\'any P\'eter S\'et\'any 1/c, 1117 Budapest, Hungary\\
email: buczo@caesar.elte.hu\\
{\tt  http://buczo.web.elte.hu}\\
ORCID Id: 0000-0001-5481-8797
  \smallskip\\
  Bruce Hanson, Department of Mathematics,\\ Statistics and Computer Science,\\ St.\ Olaf College,
Northfield, Minnesota 55057, USA\\
{email:} \texttt{hansonb@stolaf.edu}
 \smallskip\\
 Bal\'azs Maga\thanks{\scriptsize This author was supported by the \'UNKP-19-3 New National Excellence of the Hungarian Ministry of Human Capacities, and by the Hungarian National Research, Development and Innovation Office–NKFIH, Grant 124749.},
Department of Analysis, ELTE E\"otv\"os Lor\'and\\
University, P\'azm\'any P\'eter S\'et\'any 1/c, 1117 Budapest, Hungary\\
 email: magab@caesar.elte.hu \\{\tt  http://magab.web.elte.hu/}
  \smallskip\\
and
  \smallskip\\
 G\'asp\'ar V\'ertesy\thanks{\scriptsize This author was supported by the Hungarian National Research, Development and Innovation Office–NKFIH, Grant 124749.
 \newline\indent {\it Mathematics Subject
Classification:} Primary : 26A16, Secondary : 26A21, 28A05.
\newline\indent {\it Keywords:} Lipschitz functions, little lip constant, one sided density.},
 Department of Analysis, ELTE E\"otv\"os Lor\'and\\
University, P\'azm\'any P\'eter S\'et\'any 1/c, 1117 Budapest, Hungary\\
email: vertesy.gaspar@gmail.com\
}
\date{\today}
\begin{document}
\maketitle

\newpage

\begin{abstract}
We denote the local ``little" Lipschitz constant of a function
$f: {\ensuremath {\mathbb R}}\to {\ensuremath {\mathbb R}}$ by $ \lip f$. In this paper we settle the following question: For which sets $E {\subset}  {\ensuremath {\mathbb R}}$ is it possible to find a continuous function $f$ such that
$ \lip f=\mathbf{1}_E$?

In an earlier paper we introduced the concept of strongly one-sided dense sets. Our main result characterizes $\lip 1$ sets as countable unions of closed sets which are strongly one-sided dense.

We also show that a stronger statement is not true i.e. there are strongly one-sided dense $F_\sigma$ sets which are not $\lip 1$.
  
%We also show that there are non-empty closed sets which contain no non-empty closed subsets which are strongly one-sided dense.
   \end{abstract}

\section{Introduction}\label{*secintro}

We begin by introducing some basic notation.
We will assume throughout that $f: {\mathbb R} \to  {\mathbb R}$ is continuous.
Then the so-called ``big Lip'' and ``little lip'' functions are defined as follows:

 \begin{equation}
  \Lip
  f(x)= \limsup_{r\to 0^+}M_f(x,r),\qquad\label{Lipdef}
  \lip
  f(x)= \liminf_{r\rightarrow 0^+}M_f(x,r),
 \end{equation}
where
$$M_f(x,r)=\frac{\sup\{|f(x)-f(y)| \colon |x-y| \le r\}}r.$$

The origin of the big Lip function dates back to the early 1900s, while the little lip function is a more recent phenomenon.  As far as we know, it appears for the first time in a paper by Balogh and Cs\"ornyei \cite{[BaloghCsornyei]}.  More recently, there have been a number of papers dealing with various aspects of the little lip function.  See \cite{[Hanson]}, \cite{[Hanson2]}, \cite{[BHRZ]}, 
\cite{[MaZi]} and \cite{[Zi]}. 

In \cite{[BHMVlip]}, the authors of this note investigated when it is possible for $ \Lip f$ (or $ \lip f$) to be a characteristic function.  To expedite this investigation we set the following definition:  Given a set $E \subset  {\mathbb R}$ we say that $E$ is  $ \Lip 1$ ($ \lip 1$) if there is a continuous function defined on $ {\mathbb R}$ such that $ \Lip f =\mathbf{1}_E$, ($ \lip f=\mathbf{1}_E$).  The main results in \cite{[BHMVlip]} gave necessary or sufficient conditions for $E$ to be $ \Lip 1$ or $ \lip 1$.  We were not able to come up with a characterization of either type of set. 

Our main result in this note (presented in Section \ref{*seclip}) is to improve on {Theorems 4.7 and 4.8} from \cite{[BHMVlip]} by characterizing $\lip 1$ sets as countable unions of closed sets satisfying the following density property:

\begin{definition}
The set $E$ is {\it strongly one-sided dense} at $x$ if for any sequence $\{I_n\}=\{[x-r_n,x+r_n]\}$ such that
$r_n \to 0^+$  we have 
$$\max\Big \{\frac{|E \cap [x-r_n,x]|}{r_n},\frac{|E \cap [x,x+r_n]|}{r_n}\Big \}\to 1.$$
(Here and elsewhere in this paper $|E|$ denotes the Lebesgue measure of the set $E$.)
The set $E$ is {\it strongly one-sided dense} (SOSD) if $E$ is strongly one-sided dense at every point $x \in E$.
\end{definition}

 Quite often obtaining a result for the $\lip$ exponent is more difficult than deducing a corresponding result for the $\Lip$ one, therefore it is a bit peculiar that the question of a similar characterization of $\Lip 1$ sets remains open. In this direction the following are known: in Theorem 4.1 of \cite{[BHMVlip]} we showed that if $E\subset {\ensuremath {\mathbb R}}$ is $ \Lip 1$ then $E$ is a weakly dense $G_\delta$ set.
 (For the definition of weak density we refer to \cite{[BHMVlip]}.)
 However, in Theorem 6.3 of the same paper we showed that there exists a weakly dense, $G_\delta$ set $E\subset {\ensuremath {\mathbb R}}$ which is not $ \Lip 1$, thus this condition on $E$ is necessary, but not sufficient.

It is worth mentioning that at first glance one might believe that every SOSD  $F_\sigma$ set can be written as the countable union of  SOSD  closed sets, implying that such sets are $\lip 1$ due to our characterization. If { this} were true, our theorem could be formulated more neatly by saying that the $\lip 1$ sets are precisely the  SOSD  $F_\sigma$ sets.
%In \cite[Theorem 4.7]{[BHMVlip]} it was shown that $\lip 1$ sets are strongly one-sided dense and $F_\sigma$ 
%(this is an obvious consequence of Theorem \ref{char_thm} which is the main result of the present paper). 
However, in Section \ref{not lip 1} we will show that the above intuition is misleading: %the converse is not true: 
there is  an  SOSD  $F_\sigma$ set which does not contain any nonempty, closed,  SOSD  subsets, {and} therefore is not expressible as a union of such sets.

%In measure theory quite often one can verify that a set differs only in a small, measure zero set from some much nicer set.
%For example in Theorem 1.1 of \cite{[BHMVlipap]} we showed that for every measurable set $E$ there exists a $G_{\delta}$, $\Lip 1$ set $\widetilde{E}$ such that $|E\triangle \widetilde{E}| = 0$.  
%First one might think that such approximation is also possible by strongly one-sided dense sets. 
%Theorem \ref{bad SOSD} in Section \ref{not lip 1} shows that it is not the case, we construct a non-empty closed set that fails to have a non-empty subset which is closed and strongly one-sided dense.

\section{Characterizing little lip sets}\label{*seclip}

\begin{comment}
	
\begin{notation}
For every $A\subset\R$ let $A^c:=\R\setminus A$.
\end{notation}

\end{comment}

\begin{notation}
 For any $S,T\subset  {\mathbb R}$ we define $d(S,T)$ to be the distance from $S$ to $T$,
that is $\inf\{|x-y|:x\in S,\  y\in T \}$. Moreover, for any $x \in  {\mathbb R}$, simply put $d(x,S)=d(\{x\},S)$.
\end{notation}

%\begin{definition}\label{l converge}
%We write $I_n \stackrel{l}{\to} x$ (resp.~$I_n \stackrel{r} \to x$) if $(I_n)$ is a sequence of closed %intervals with $I_n=[x-r_n,x]$ (resp.~$I_n=[x,x+r_n]$) and $r_n \searrow 0$.
%\end{definition}

%\begin{definition}\label{one side dense}
%The set $E$ is {\it right} ({\it left}) {\it dense} at $x$ if for any sequence $(I_n)$ such that
%$I_n \stackrel{r}{\to} x$ ($I_n \stackrel{l}{\to}x$) we have $\frac{|E \cap I_n|}{|I_n|}\to 1$. In this %case, we say that $x$ is a {\it right density point} ({\it left density point}) of $E$.
%The set $E$ is {\it one-sided dense} if $E$ is either right or left dense at every point $x \in E$.
%\end{definition}

As noted in the introduction our main result is the following:

\begin{theorem}\label{char_thm}
The set $E \subset \R$ is $\lip 1$ if and only if $E=\Union_{n=1}^\infty E_n$ where each $E_n\subset\R$ is a strongly one-sided dense closed set.
\end{theorem}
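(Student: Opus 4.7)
The theorem is an equivalence so I would handle the two directions separately. For the forward direction ($\Rightarrow$), suppose $\lip f = \mathbf{1}_E$ for some continuous $f:\R\to\R$. The natural candidate decomposition uses the sub-level sets associated with $\inf_{r \in (0,1/k]} M_f(x,r)$: for each integer $n \geq 2$ and $k \geq 1$, I would set
$$F_{n,k} = \{x \in \R : M_f(x, r) \geq 1 - 1/n \text{ for every } r \in (0, 1/k]\}.$$
Each $F_{n,k}$ is closed because $M_f(\cdot, r)$ is continuous for every fixed $r$ (by uniform continuity of $f$ on bounded sets). Any $x \in F_{n,k}$ has $\lip f(x) \geq 1 - 1/n > 0$, which forces $\lip f(x) = 1$ and hence $x \in E$; conversely, each $x \in E$ has $M_f(x, r) \to 1$ as $r \to 0^+$, so $x \in F_{n,k}$ for all large $k$. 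This gives $E = \bigcup_k F_{n,k}$ for each fixed $n$.

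The main task is verifying that the $F_{n,k}$ (possibly after a further stratification) are SOSD. The key analytic input is that $\lip f$ being finite everywhere forces $f$ to be differentiable almost everywhere with $|f'(x)| = \lip f(x) \in \{0, 1\}$ a.e., so the set $A = \{|f'| = 1\}$ sits inside $E$ up to a null set. Given $x \in F_{n,k}$ and small $r \leq 1/k$, I would pick $y$ realising nearly the supremum in $M_f(x,r)$, so $|y - x| \in [(1 - 1/n)r, r]$ and $|f(y) - f(x)| \geq (1 - 1/n)r$, say with $y > x$; an $L^1$ computation on $[x,y]$ with $f' \in \{-1, 0, 1\}$ then yields $|A \cap [x, x+r]| \geq (1 - 1/n)r$, so $E$ has density at least $1 - 1/n$ on the relevant side of $x$. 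Upgrading from density of $E$ to density of $F_{n,k}$ itself is the delicate part: for $z \in E \setminus F_{n,k}$ let $s(z) \leq 1/k$ be a bad scale where $M_f(z, s(z)) < 1 - 1/n$. A crucial observation is that if $x \in F_{n,k}$ lies inside the ball $[z - s(z), z + s(z)]$, then the constraint $M_f(x, \cdot) \geq 1 - 1/n$ forces $|x - z| \geq (1 - 1/n) s(z)$, so bad balls cannot be deeply anchored at $x$. A Vitali-type cover of the bad centres in $[x, x+r]$ combined with the slope budget $f(y) - f(x) \geq (1 - \eta)r$ with $\eta \to 0$ should then give $|(E \setminus F_{n,k}) \cap [x, x+r]| = o(r)$.

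For the reverse direction ($\Leftarrow$), assume $E = \bigcup_{n=1}^\infty E_n$ with each $E_n$ closed and SOSD. My plan is to invoke Theorem 4.7 of \cite{[BHMVlip]} to obtain, for each individual closed SOSD set $E_n$, a continuous function $g_n$ with $\lip g_n = \mathbf{1}_{E_n}$, and to assemble these into a global $f$ by a carefully weighted sum $f = \sum_n \varepsilon_n g_n$ with $\varepsilon_n \downarrow 0$ rapidly. By choosing each $g_n$ to operate at spatial scales that are well separated among different $n$, one arranges that at any $x \in E$ the component $g_n$ with $x \in E_n$ dominates the limiting behaviour of $M_f(x, r)$ on the subsequence of scales where $E_n$ is one-sidedly dense at $x$, yielding $\lip f(x) = 1$; meanwhile for $x \notin E$ each $g_n$ contributes $\lip g_n(x) = 0$ at its own characteristic scale, and careful coordination of the tail $\sum_{m>n} \varepsilon_m g_m$ ensures it is negligible in $\Lip$-norm at the relevant scales, giving $\lip f(x) = 0$.

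The principal obstacle I foresee is the SOSD verification in $(\Rightarrow)$. The Vitali argument sketched above loses control at the boundaries of $[x, x+r]$, where a single bad ball extending outside the interval can contribute measure of order $r$ to $E \setminus F_{n,k}$; a careful stratification of the $F_{n,k}$ by how the bad scales $s(z)$ compare to the ambient scale $r$, and a refined covering argument separating bad balls with $s(z) \lesssim r$ from those with $s(z) \gg r$, will likely be required to close this gap and produce genuinely SOSD closed pieces.
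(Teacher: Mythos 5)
Your proposal has genuine gaps in both directions. For the direction you call ($\Leftarrow$), the assembly $f=\sum_n \varepsilon_n g_n$ with $\varepsilon_n\downarrow 0$ cannot work as described: the little lip constant scales linearly, so $\lip(\varepsilon_n g_n)=\varepsilon_n\mathbf{1}_{E_n}$, and at a point $x$ that belongs only to $E_n$ for a large $n$ the "dominant" summand has local slope at most $\varepsilon_n<1$ (and the tail is even smaller), so no choice of well-separated spatial scales can produce $\lip f(x)=1$. Smallness of the later terms must be achieved by shrinking the spatial scale of oscillation while keeping the slope at full strength, not by damping amplitudes with scalars. This is exactly what the paper's construction does: after arranging $E_1\subset E_2\subset\cdots$, it defines $f_n$ on the intervals contiguous to $E_{n-1}$ as a sawtooth whose increments locally equal $|(E_n\setminus E_{n-1})\cap\cdot\,|$ on a fine partition, so that $0\le f_n\le \min\{2^{-n},2^{-n}d^2(\cdot,E_{n-1})\}$ yet $f_n$ still realizes the full measure of $E_n\setminus E_{n-1}$ near its points; then $\sum_n|f_n(x)-f_n(y)|\le |E\cap(x,y)|$ gives $\lip f\le 1$, the term $f_{n_x}$ gives $\lip f(x)=1$ for $x\in E$, and the quadratic decay near $E_{n-1}$ gives $\lip f=0$ off $E$. (Also, the black-box result you want is Theorem 4.8, not 4.7, of the cited paper; but even granting a $g_n$ with $\lip g_n=\mathbf{1}_{E_n}$ for each $n$, the weighted-sum assembly fails for the reason above.)

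For the direction ($\Rightarrow$), your sets $F_{n,k}$ are indeed closed and their union over $k$ is $E$, but the SOSD verification, which you yourself flag as the principal obstacle, is missing, and the estimates you sketch fall short in two ways. First, the bound $|f(y)-f(x)|\le |[x,y]\cap E|$ only yields one-sided density of $E$ at least $1-1/n$ at points of $F_{n,k}$, whereas SOSD requires the density of the closed piece itself to tend to $1$; bridging both deficits (from $E$ to $F_{n,k}$, and from $1-1/n$ to $1$) is precisely the hard content of the theorem. Second, the "crucial observation" that $x\in F_{n,k}$ inside a bad ball $[z-s(z),z+s(z)]$ forces $|x-z|\ge(1-1/n)s(z)$ does not follow from the available triangle-inequality computation: taking $r=s(z)-|x-z|$ only gives $|f(y)-f(x)|\le 2(1-1/n)s(z)$, which is no contradiction, so the Vitali argument has no anchor. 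The paper proceeds differently: for each $n$ it deletes, around every $y\in G=\R\setminus E$, an interval $(y-r_{n,y},y+r_{n,y})$ chosen by transfinite recursion so that $M_f$ is uniformly small (at most $8/50$) on a slightly larger interval, the endpoints are points of differentiability of $f$, and overlaps are controlled; setting $E_n:=\R\setminus\bigcup_{y\in G}(y-r_{n,y},y+r_{n,y})$, one gets closed sets with $\bigcup_n E_n=E$, and the key Lemma \ref{char-ban} shows $|f(x_0)-f(x_0\pm r)|\le r-\frac{21}{400}\,|G_n\cap(x_0,x_0\pm r)|$ for $x_0\in E_n$. Since $\lip f(x_0)=1$, this forces the relative measure of $G_n$ to vanish on one side, and because $E_n$ is the complement of $G_n$, the density statement is automatically about $E_n$ itself. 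To salvage your route you would need an analogous mechanism converting the slope budget directly into smallness of the complement of the closed piece, not merely of $\R\setminus E$.
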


\begin{proof}
We begin by proving the sufficiency condition so assume that  $E=\Union_{n=1}^\infty E_n$, where each $E_n$ is closed and SOSD.  We may assume without loss of generality that $E_1\subset E_2 \subset E_3 \subset\ldots$ and $E_1\neq\emptyset$. 
Let $E_0:=\emptyset$.

%For every $n\in\N$ set $G_n := E_n\setminus \Union_{k=1}^{n-1} F_k$.

Set $f_1\colon\R\to\R$ such that 
$$
f_1(x)=
\begin{cases}
|[0,x]\cap E_1| & \text{if } x \ge 0 \\
|[x,0]\cap E_1| & \text{if } x < 0
 .   \end{cases}
$$ 

Let $n>1$.
We will define $f_n\colon\R\to\R$ to satisfy
\begin{equation}\label{szuk}
0 \le f_n(x) \le \min\{2^{-n},2^{-n}\cdot d^2(x,E_{n-1})\}
\end{equation}
for every $x\in\R$.   
 Choose an interval $I=(a,b)$,   or a half-line     contiguous to $E_{n-1}$. 
Suppose that $a$ is finite (the other case is similar).
Define a sequence $(a_k)_{k=0}^\infty$ in $(a,b)$ for which 
\begin{enumerate}[(I)]
\item $a_0=(a+b)/2$ if $b<\infty$, and $a_0=a+1$ if $b=\infty$,
\item\label{a_k suruseg} $a_{k-1}-a_k < \min\{2^{-n}(a_k-a)^2,2^{-n}\}$,
\item $a_k\searrow a$.
\end{enumerate}
If $x\in (a_{k},a_{k-1}]$ for some $k\in\N$, set
\begin{equation}\label{E_n def}
f_n(x) := 
\begin{cases}
|(a_k,x)\cap E_n| & \text{if } |(a_k,x)\cap E_n| \le |(x,a_{k-1})\cap E_n| \\
|(x,a_{k-1})\cap E_n| & \text{if } |(a_k,x)\cap E_n|>|(x,a_{k-1})\cap E_n| 
 .   \end{cases}
\end{equation}
If $b<\infty$ set $f_n$ similarly on $[(a+b)/2,b)$. 
If $b=\infty$, $k\in\N$ and $x\in \big(a_0+(k-1)2^{-n},a_0+(k2^{-n})\big)$, let
$$
f_n(x) := 
\begin{cases}
|(a_0+(k-1)2^{-n},x)\cap E_n| & \text{if } {\substack{\textstyle |(a_0+(k-1)2^{-n},x)\cap E_n|\\ \textstyle \le |(x,a_0+k2^{-n})\cap E_n|}} \\
|(x,a_0+k2^{-n})\cap E_n| & \text{if } {\substack{\textstyle |(a_0+(k-1)2^{-n},x)\cap E_n| \\ \textstyle > |(x,a_0+k2^{-n})\cap E_n|.}}
\end{cases}
$$
Let $f_n|_{E_{n-1}} :\equiv 0$.
Observe that for every $x\in\R$ and $y\in E_{n-1}$ there is a $y'$ in the closed interval determined by $x$ and $y$ such that $f_n(y')=0$ and $|x-y'|\le \min\{2^{-n},2^{-n}|x-y|^2\}$, which implies \eqref{szuk}.
%By \eqref{a_k suruseg} and \eqref{E_n def}, if $x\in E_{n-1}$ and $y\in\R$, then 
%\begin{equation}\label{lapos}
%|f_n(x)-E_n(y)|\le \max\{2^{-n},2^{-n}(x-y)^2\}.
%\end{equation}
%Thus, if $x\in E_{n-1}$, then for every $y\in\R$

Define $f\colon\R\to\R$ by $f(x):=\sum_{n=1}^\infty f_n(x)$. 
If $x,y\in\R$ and $x<y$ we have that 
$$
\frac{|f(x)-f(y)|}{y-x} 
\le \sum_{n=1}^\infty \frac{|f_n(x)-f_n(y)|}{y-x} 
$$
$$\le \sum_{n=1}^\infty \frac{|(E_n\setminus E_{n-1})\cap (x,y)|}{y-x} 
= \frac{|E\cap (x,y)|}{y-x} 
\le 1,
$$
hence $\lip f(x)\le 1$ for every $x\in\R$.

Suppose that $x\in E_{n_x}\setminus E_{n_x-1}$ for some $n_x\in\N$. 
Since $E_{n_x}$ is SOSD and $E_{n_x-1}$ is closed, for every $\eps>0$ there is an $r_x>0$ such that for every $r\in (0,r_x)$ 
$$
\max\{|(E_{n_x}\setminus E_{n_x-1})\cap (x,x-r)|,|(E_{n_x}\setminus E_{n_x-1})\cap (x,x+r)|\} > r(1-\eps).
$$
Fix $r \in (0,r_x)$. 
Suppose that $|(E_{n_x}\setminus E_{n_x-1})\cap (x,x+r)| > r(1-\eps)$ (the other case is similar). 
By the definition of the {$f_n$s}, if $r$ is small enough, then %$E_n|_{(x,x+r)}$ is constant for all $n<n_x$, and 
%for every $z\in (x,x+r)$ we have 

$$  
|f_{n_x}(x+r) - f_{n_x}(x)| = |(E_{n_x}\setminus E_{n_x-1})\cap (x,x+r)|.  
$$
Consequently,
\begin{align*}
|f(x)-f(x+r)| 
&\ge |f_{n_x}(x)-f_{n_x}(x+r)|-\sum_{n\in\N\setminus\{ n_x\}} |f_n(x)-f_n(x+r)| \\
&\ge |(E_{n_x}\setminus E_{n_x-1})\cap (x,x+r)| - \sum_{n\in\N\setminus \{n_x\}} |(E_{n}\setminus E_{n-1})\cap (x,x+r)| \\
&\ge r(1-\eps)-r\eps = r(1-2\eps).
\end{align*}
Thus $\lip f(x) \ge 1$.

If $d(x,E)>0$, then there is a neighbourhood $U_x$ of $x$ such that 
$f_n|_{U_x}$ is constant for every $n\in\N$, hence $f|_{U_x}$ is also constant and $\lip f(x) = 0$.

%Suppose that $r>0$, $n\in\N$ and $n>1$. 
%If $(E_n\setminus F_{n-n})\cap (x-r,x+r) = \emptyset$, then $E_n$ is constant on $(x-r,x+r)$. 
%Otherwise, let

If $x\notin E$ and  $d(x,E)=0$, then for every $\eps>0$ there is an $n_\eps\in\N$ for which $E_{n_\eps}\cap (x-\eps,x+\eps) \neq \emptyset$. 
Let $x_\eps\in E_{n_\eps}$ be such that $|x-x_\eps|=d(x,E_{n_\eps})$. 
%and suppose that $x_\eps<x$. 
%As $F_{n_\eps}$ is closed, we can assume that $F_{n_\eps}\cap (x_\eps,x) = \epmtyset$.
Hence, $f_n$ is constant on $(x-|x-x_\eps|,x+|x-x_\eps|)$ for every $n\le n_\eps$.
%Let $r'_x := x-x_\eps$.
By \eqref{szuk}, for every $y\in (x-|x-x_\eps|,x+|x-x_\eps|)$ and $n > n_\eps$
\begin{equation*}%\label{lapos2}
\begin{split}
|f_n(x)-f_n(y)| 
&\le |f_n(x)-f_n(x_\eps)|+|f_n(x_\eps)-f_n(y)| \\
&\le 2^{-n}(x-x_\eps)^2 + 2^{-n} (x_\eps-y)^2 \\
&\le 2^{-n}(x-x_\eps)^2 + 2^{-n} (2(x-x_\eps))^2
\le (2^{-n}+2^{-n+2})\eps |x-x_\eps|.
%\le 2\cdot 2^{-n} (2\eps)^2. 
\end{split}
\end{equation*}
Thus 
\begin{equation*}%\label{lapos2}
\begin{split}
|f(x)-f(y)| 
&\le \sum_{n=n_\eps+1}^\infty  |f_n(x)-f_n(y)|
\le \sum_{n=n_\eps+1}^\infty (2^{-n}+2^{-n+2})\eps |x-x_\eps| \\
&\le \sum_{n=2}^\infty (2^{-n}+2^{-n+2})\eps |x-x_\eps| = \Big( \frac{1}{2}+2\Big)\eps|x-x_\eps| .
%&\le \sum_{n=n_\eps}^\infty 2^{-n}(x-x_\eps')^2 + 2^{-n} (x_\eps-y)^2 
%\le 2\cdot 2^{-n} (2\eps)^2. 
\end{split}
\end{equation*}
Since $\eps$ was chosen arbitrarily, we have that $\lip f(x) = 0$, which concludes the proof of the sufficiency.

For the proof of the necessity we will use the following lemma which is Lemma 4.6 from \cite{[BHMVlip]}.

\begin{lemma}\label{lipnovekedes}
If $E\subset {\ensuremath {\mathbb R}}$, $f\colon {\ensuremath {\mathbb R}}\rightarrow {\ensuremath {\mathbb R}}$ and $ \lip f= \mathbf{1}_E$ 
then $|f(a)-f(b)|\le |[a,b]\cap E|$ for every $a,b\in {\ensuremath {\mathbb R}}$ (where $a<b$).
\end{lemma}

Assume that $E$ is $\lip 1$ and let 
 $f\colon\R\to\R$ be such that $\lip f = \mathbf{1}_E$.
 If $E=\R$, then the proof is trivial so we assume that $E\neq \R$.
Set $G:=\R\setminus E$.
Let $\kappa$ denote the smallest ordinal number for which 
$[1,\kappa)$
has the same cardinality as $G$.
Let $(y_\alpha)_{\alpha\in [1,\kappa)}$ be a well-ordering of $G$. 

 Suppose that $n\in\N$.  %be be fixed until the $\clubsuit$ sign.
We will define $r_{n,y_\alpha},r'_{n,y_\alpha}>0$ for every $\alpha\in [1,\kappa)$ by transfinite recursion on $\alpha$ such that 
\begin{enumerate}[(a)]
\item\label{r kicsi} $r'_{n,y_\alpha}\in (0,n^{-1}),$
\item\label{lapos 0} $M_f(y_\alpha,2r'_{n,y_\alpha})<10^{-2},$
\item\label{jo vegpontok} $r_{n,y_\alpha}\in (0.5r'_{n,y_\alpha},r'_{n,y_\alpha})$ and  $f$  is differentiable at $y_\alpha-r_{n,y_\alpha}$ and $y_\alpha+r_{n,y_\alpha}$,  
\item\label{egy oldalrol} $y_\alpha+r_{n,y_\alpha} \neq y_\beta-r_{n,y_\beta}$ and $y_\alpha-r_{n,y_\alpha} \neq y_\beta+r_{n,y_\beta}$ for every $\beta\in [1,\alpha)$, 
\item\label{resze} if $\alpha>1$ and $y_\alpha\in\Union_{\beta\in [1,\alpha)} (y_\beta-r_{n,y_\beta},y_\beta+r_{n,y_\beta})$ then
$$
(y_\alpha-r'_{n,y_\alpha},y_\alpha+r'_{n,y_\alpha})\subset \Union_{\beta\in [1,\alpha)} (y_\beta-r_{n,y_\beta},y_\beta+r_{n,y_\beta}).
$$
\end{enumerate}
Since $\lip f(y_1)=0$, we can choose $r'_{n,y_1}>0$ to satisfy \eqref{r kicsi} and \eqref{lapos 0}. 
By Lemma \ref{lipnovekedes}, $f$ is Lipschitz  and {therefore} is differentiable at almost every point, 
hence there is an $r_{n,y_1}>0$ such that \eqref{jo vegpontok} holds for $\alpha=1$,   and  conditions \eqref{egy oldalrol} and \eqref{resze} are empty at this step.  
Suppose that $\alpha \in (1,\kappa)$ and we have already defined $r'_{n,y_\beta}$ and $r_{n,y_\beta}$ for every $\beta\in [1,\alpha)$. %with the above properties. 
Take an $r'_{n,y_\alpha}>0$ which satisfies \eqref{r kicsi}, \eqref{lapos 0} and \eqref{resze}. 
Since $f$ is Lipschitz, we can choose $r_{n,y_\alpha}>0$ to make \eqref{jo vegpontok}
   and \eqref{egy oldalrol} 
   true (since the cardinality of $\alpha$ is less than  the cardinality of the  continuum).

Let $x\in G$. We have that,
\begin{equation}\label{M_f(x,2r_{n,x})}
\begin{split}
M_f(x,2r_{n,x})
&= \frac{1}{2r_{n,x}}\cdot \sup \Big\{|f(x)-f(z)| : z\in (x-2r_{n,x},x+2r_{n,x})\Big\} \\
&\le \frac{1}{2r_{n,x}}\cdot \sup \Big\{|f(x)-f(z)| : z\in (x-2r'_{n,x},x+2r'_{n,x})\Big\} \\
&= \frac{1}{2r_{n,x}}\cdot 2r'_{n,x} M_f(x,2r'_{n,x}) 
\underset{\text{by \eqref{jo vegpontok}}}\le 2\cdot M_f(x,2r'_{n,x}) 
\underset{\text{by \eqref{lapos 0}}}\le 2\cdot 10^{-2}
= \frac{1}{50},
\end{split}
\end{equation}
hence for every $x_0,x_1\in (x-2r_{n,x},x+2r_{n,x})$
\begin{equation}\label{x_1 x_2}
|f(x_0)-f(x_1)|\le |f(x_0)-f(x)|+|f(x)-f(x_1)| %\underset{\text{by \eqref{lapos 0}}}{\le}
\le 2\cdot 2r_{n,x} M_f(x,2r_{n,x})
\le \frac{4r_{n,x}}{50}.
\end{equation}
We obtain that for every $x_0\in(x-1.5r_{n,x},x+1.5r_{n,x})$ 
\begin{equation}\label{lapos}
\begin{split}
M_f(x_0,0.5r_{n,x}) 
&\le \sup \Big\{\frac{|f(x_0)-f(x_1)|}{0.5r_{n,x}} : x_1\in (x-2r_{n,x},x+2r_{n,x})\Big\} \\
&\underset{\text{by \eqref{x_1 x_2}}} \le  \frac{4r_{n,x}}{50} \cdot \frac{1}{0.5r_{n,x}} 
= \frac{8}{50}.
\end{split}
\end{equation}

For every $n\in\N$ set $G_n := \Union_{x\in G} (x-r_{n,x},x+r_{n,x})$ and $E_n := \R\setminus G_n$. Hence the sets $E_n$ are closed.

As $G\subset G_n$ for all $n\in\N$, we have that $\Union_{n=1}^\infty E_n\subset E$.

If $x_0\in E$, then there is a $\varrho>0$ such that for every $r\in (0,\varrho)$ we have $M_f(x_0,r)>\frac{8}{50}$.
Thus, if $x_0\in (x-r_{n^*,x},x+r_{n^*,x})$ for some $x\in G$ and $n^*\in\N$, then $0.5r_{n^*,x}\ge \varrho$ by \eqref{lapos}.
As 
\begin{equation}\label{r_{n,x} kicsi}
r_{n^*,x} \underset{\text{by \eqref{jo vegpontok}}}\le r'_{n^*,x} \underset{\text{by \eqref{r kicsi}}}\le \frac{1}{n^*},
\end{equation}
$n^*\le r_{n^*,x}^{-1} \le 2\varrho^{-1}$. Hence $x_0\notin \bigcap_{n=1}^\infty G_n$, that is $x_0\in \Union_{n=1}^\infty E_n$. This implies $E = \Union_{n=1}^\infty E_n$.

Assume that $n\in\N$ is fixed until the end of the proof.

We need a lemma to prove that $E_n$ is SOSD.

\begin{lemma}\label{char-ban}
Let $x_0\in E_n$.
If $x_0\neq x+r_{n,x}$ for every $x\in G$, then for small enough $r>0$
\begin{equation}\label{bal_becsles}
|f(x_0)-f(x_0-r)|
\le r -\frac{21}{400}|G_n\cap (x_0-r,x_0)|.
\end{equation}
Similarly, if $x_0\neq x-r_{n,x}$ for every $x\in G$, 
then for small enough $r>0$
\begin{equation}\label{jobb_becsles}
|f(x_0)-f(x_0+r)|
\le r -\frac{21}{400}|G_n\cap (x_0,x_0+r)|.
\end{equation}
\end{lemma}

Lemma \ref{char-ban} will be proved later.

{ Assume that $x_0\in E_n$. Thus $\lip f(x_0) = 1$.

By Lemma \ref{char-ban}, if $x_0 \neq x+r_{n,x}$ and $x_0 \neq x-r_{n,x}$ for every $x\in G$, then $E_n =\R\sm G_n$ must be SOSD at $x_0$.}

Now suppose that $x_0=x+r_{n,x}$ for some $x\in G$ (the $x_0=x-r_{n,x}$ case is similar).  
By \eqref{egy oldalrol}, we have that $x_0\neq x'-r_{n,x'}$ for every $x'\in G$.  
According to \eqref{jo vegpontok}, $f$ is differentiable at $x+r_{n,x}=x_0$. 
Therefore  $$ \lim_{r\to 0^+}\frac{|f(x_0+r)-f(x_0)|}r=|{f'(x_0)}|=\lip f(x_0) = 1,$$ and hence, the set $E_n$ must be dense (in the classical one-sided Lebesgue density sense) at $x_0$ from the right by \eqref{jobb_becsles}.  
Thus, $E_n$ is SOSD at $x_0$, which concludes the proof of the theorem. \qed

{\bf Proof of Lemma \ref{char-ban}.}
We will prove only \eqref{bal_becsles}, the proof of \eqref{jobb_becsles} is similar. Thus, suppose that  $x_0\in E_n$ and
\begin{equation}\label{njv}
\text{$x_0\neq x+r_{n,x}$ for every $x\in G$.}
\end{equation}

Since {$x_{0}\in E_n \subset E$} we have $\lip f(x_{0})=1$.  
By \eqref{lapos}, we can take an $R_0>0$ such that %for every $r\in (0,R_0)$
if $x\in G \cap (x_0-R_0,x_0)$ %and $x_0-r\in (x-r_{n,x},x+r_{n,x})$ 
then $x_0\notin (x-1.5r_{n,x},x+1.5r_{n,x})$.

We  claim  that there is an $R_1\in (0,R_0)$  such  that 
\begin{equation}\label{R_1}
\begin{gathered}
(x_0-R_1,x_0)\cap G_n \subset \Union_{x\in G\cap (x_0-R_0,x_0)} (x-r_{n,x},x+r_{n,x}).
\end{gathered}
\end{equation} 
Proceeding towards a contradiction suppose that 
there is a sequence $(w_i)_{i=1}^\infty$ in $G_n\cap (  x_0-R_0,x_0)\setminus \Union_{x\in G\cap (x_0-R_0,x_0)} (x-r_{n,x},x+r_{n,x})$ converging to $x_0$.
For every $i\in\N$ let $\alpha_i$ be the least ordinal number for which $y_{\alpha_i}\in(-\infty,x_0-R_0]$ and $w_i\in (y_{\alpha_i}-r_{n,\alpha_i},y_{\alpha_i}+r_{n,\alpha_i})$. 
%As $y_{\alpha_i}$s are in $(-\infty,R_0]$, the sequence $(\alpha_i)_{i=1}^\infty$ is increasing. 
By \eqref{njv} choosing a proper subsequence of $(w_i)_{i=1}^\infty$ we can assume that $(\alpha_i)_{i=1}^\infty$ is strictly increasing. 
This means that for every $i\in\N$
$$
w_i\in (y_{\alpha_i}-r_{n,y_{\alpha_i}},y_{\alpha_i}+r_{n,y_{\alpha_i}}) \setminus \Union_{\beta\in [1,\alpha_i)} (y_\beta-r_{n,y_\beta},y_\beta+r_{n,y_\beta}).
$$
Thus, if $i>1$, we have $y_{\alpha_i}\notin (y_{\alpha_{i-1}}-r_{n,y_{\alpha_{i-1}}}, y_{\alpha_{i-1}}+r_{n,y_{\alpha_{i-1}}})$ by \eqref{resze}, hence the fact
$$
y_{\alpha_{i-1}}+r_{n,y_{\alpha_{i-1}}} \in  [x_0-R_0, w_i] \subset   (y_{\alpha_i}-r_{n,y_{\alpha_i}},y_{\alpha_i}+r_{n,y_{\alpha_i}}) 
$$
implies
$$
(y_{\alpha_{i-1}}-r_{n,y_{\alpha_{i-1}}},y_{\alpha_{i-1}}+r_{n,y_{\alpha_{i-1}}})\subset (y_{\alpha_i},y_{\alpha_i}+r_{n,y_{\alpha_i}}).
$$
We obtain that $\lim_{i\to\infty} r_{n,y_{\alpha_i}} = \infty$. Furthermore, $\lim_{i\to\infty} r'_{n,y_{\alpha_i}} = \infty$ by \eqref{jo vegpontok}, which contradicts \eqref{r kicsi}.

Fix an $r\in (0,R_1)$.

If $x_0-r\in G_n$, then by \eqref{R_1} there is an $x\in (x_0-R_0,x_0)\cap G$ such that
\begin{equation}\label{metszo}
x_0-r\in (x-r_{n,x},x+r_{n,x}).
\end{equation} 
We have
\begin{equation}\label{bal_szele_0}
\begin{gathered}
|f(x+1.5r_{n,x})-f(x_0-r)| 
\underset{\text{by \eqref{x_1 x_2}}}{\le} \frac{4r_{n,x}}{50}
\le \frac{4}{25} \cdot (x+1.5r_{n,x}-(x_0-r)) \\
\end{gathered}
\end{equation}
hence
\begin{equation}\label{bal_szele_1}
\begin{gathered}
|f(x_0)-f(x_0-r)| 
\le |f(x_0)-f(x+1.5r_{n,x})|+|f(x+1.5r_{n,x})-f(x_0-r)| \\
\underset{\text{by Lemma \ref{lipnovekedes} and \eqref{bal_szele_0}}}{\le} |x_0-(x+1.5r_{n,x})|+\frac{4}{25}(x+1.5r_{n,x}-(x_0-r)). \\
\end{gathered}
\end{equation}
Moreover, $x+1.5r_{n,x} \le x_0$ as $x\in (x_0-R_0,x_0)$.  Thus
\begin{equation}\label{bal_szele}
|f(x_0)-f(x_0-r)| \underset{\text{by \eqref{bal_szele_1}}}\le r -\frac{21}{25}(x+1.5r_{n,x}-(x_0-r)).
\end{equation}
If we also have
\begin{equation}\label{szelen nagy}
|G_n\cap (x_0-r,x_0)| \le 4(x+r_{n,x}-(x_0-r)),
\end{equation}
then \eqref{bal_szele} implies
\begin{equation}\label{kicsi szelen}
\begin{split}
|f(x_0)-f(x_0-r)|
&\le r -\frac{21}{25}(x+1.5r_{n,x}-(x_0-r)) \\ 
&\le r -\frac{21}{25}(x+r_{n,x}-(x_0-r)) 
\le r -\frac{21}{100}|G_n\cap (x_0-r,x_0)|.
\end{split}
\end{equation}

If $x_0-r\not\in G_n$, or   $x_0-r\in G_n$ but  \eqref{szelen nagy} does not hold then
\begin{equation}\label{feltetel 1}
|G_n\cap (x_0-r,x_0)| \le 2\Bigg|\Union_{\substack{y\in   G , \\ (y-r_{n,y},y+r_{n,y})\subset (x_0-r,x_0)}} (y-r_{n,y},y+r_{n,y})\Bigg|.
\end{equation}
Choose finitely many points $x_1,\ldots,x_k\in G\cap (x_0-r,x_0)$ for some $k\in\N\cup\{0\}$ such that 
the intervals of the form $(x_i-r_{n,x_i},x_i+r_{n,x_i})$ are disjoint, they are subsets of $(x_0-r,x_0)$ and
\begin{equation}\label{nagy}
\begin{split}
%\Big|(G_n\cap (x-r,x)) \setminus 
\Big|\Union_{i=1}^k (x_i-r_{n,x_i},x_i+r_{n,x_i})\Big| 
\ge \frac{1}{4} \Bigg| \Union_{\substack{y\in  G, \\ (y-r_{n,y},y+r_{n,y})\subset (x_0-r,x_0)}} (y-r_{n,y},y+r_{n,y})  \Bigg|.
\end{split}
\end{equation}
Thus
\begin{equation}\label{kicsi belul}
\begin{gathered}
|f(x_0)-f(x_0-r)| \\
\underset{\text{by Lemma \ref{lipnovekedes}}}{\le} \Big|(x_0-r,x_0)\setminus\Union_{i=1}^k \Big(x_i-\frac{r_{n,x_i}}{2},x_i+\frac{r_{n,x_i}}{2}\Big)\Big| \\
+ \sum_{i=1}^k \Big|f\Big(x_i-\frac{r_{n,x_i}}{2}\Big)-f\Big(x_i+\frac{r_{n,x_i}}{2}\Big)\Big| \\
\underset{\text{by \eqref{lapos} }}{\le}  \Big|(x_0-r,x_0)\setminus\Union_{i=1}^k \Big(x_i-\frac{r_{n,x_i}}{2},x_i+\frac{r_{n,x_i}}{2}\Big)\Big| + \frac{8}{50} \Big|\Union_{i=1}^k \Big(x_i-\frac{r_{n,x_i}}{2},x_i+\frac{r_{n,x_i}}{2}\Big)\Big| \\
= r  -\frac{42}{50} \Big|\Union_{i=1}^k \Big(x_i-\frac{r_{n,x_i}}{2},x_i+\frac{r_{n,x_i}}{2}\Big)\Big| \\
\underset{\text{by \eqref{nagy} }}{\le} r-\frac{42}{400} \Bigg|\Union_{\substack{x\in G, \\ (x-r_{n,x},x+r_{n,x})\subset (x_0-r,x_0)}} (x-r_{n,x},x+r_{n,x})\Bigg| \\
\underset{\text{by \eqref{feltetel 1}}}\le r-\frac{21}{400}|G_n\cap (x_0-r,x_0)|.
\end{gathered}
\end{equation}

Thus, \eqref{kicsi belul} and \eqref{kicsi szelen} imply \eqref{bal_becsles}, which concludes the proof of the lemma.
\end{proof}

%\begin{corollary}
%There is a strongly one-sided dense $F_\sigma$ set in $\R$ which is not $\lip 1$.
%\end{corollary}

\section{Approximating closed sets with strongly one-sided dense sets}\label{not lip 1}

In \cite[Theorem 4.7]{[BHMVlip]} it was shown that $\lip 1$ sets are strongly one-sided dense and $F_\sigma$ 
(this is also an obvious consequence of Theorem \ref{char_thm} of this paper). A partial converse of this was also proved in \cite[Theorem 4.8]{[BHMVlip]} (this is a special case of Theorem \ref{char_thm} too). Nevertheless, the full converse happens to be false, as we will see. First, we need a lemma:

%During our investigations of $\lip 1$ sets we arrived at the following, fairly natural question: if $F\subseteq\mathbb{R}$ is a closed set and $\varepsilon>0$ is given, can we find a closed, strongly one-sided dense set $\widetilde{F}\subseteq F$ such that $|\widetilde{F}|>(1-\varepsilon)|F|$? 
%It turns out that the answer to that question is an emphatic ``no'' as the following theorem indicates.  

\begin{lemma}\label{unapproximable closed}
There exists a closed subset of $[0,1]$ which is of positive measure and which does not contain any nonempty, closed, SOSD subsets.
\end{lemma}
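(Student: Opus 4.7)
The plan is to construct an explicit closed $K\subset[0,1]$ of positive measure as a nested Cantor-like intersection $K=\bigcap_n K_n$, where each $K_n$ is a finite disjoint union of closed intervals, and then verify directly that no nonempty closed subset of $K$ is SOSD.

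First, I would set $K_0=[0,1]$ and construct $K_n$ inductively from $K_{n-1}$ by removing, from each component interval of $K_{n-1}$, a carefully arranged finite family of open ``gap'' intervals. The gap lengths are chosen so that the total removed measure at stage $n$ is a fraction $\alpha_n|K_{n-1}|$ with $\sum_n\alpha_n<\infty$, ensuring $|K|=\prod_n(1-\alpha_n)>0$. The crucial geometric feature is that inside each parent interval the surviving subintervals are not placed uniformly but are grouped into several clusters with gaps of multiple distinct scales both between and within clusters; this is designed so that at every point $x\in K$ there are characteristic scales $r$ at which both $[x-r,x]$ and $[x,x+r]$ contain gaps of $K$ contributing a uniformly positive proportion of the length.

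Next I would verify the SOSD-obstruction property. If $F\subseteq K$ is closed with $|F|=0$, then both one-sided densities of $F$ at any point of $F$ are identically zero, so $F$ is not SOSD. If $|F|>0$, almost every point of $F$ is a Lebesgue density point and there SOSD is automatic; the obstruction must therefore be located at a non-density point of $F$. Using the multi-scale cluster structure of $K$, I would argue that any such closed $F$ of positive measure must contain a ``cluster-boundary'' point $y\in F$ --- a point sitting at the edge of one of the stage-$n$ clusters at infinitely many $n$ --- at which, along the corresponding scales $r_n\to 0^+$, both one-sided densities of $F$ at $y$ are forced to remain below a fixed constant $c<1$, because $F\subseteq K$ inherits the intrinsic gaps of $K$ visible on both sides of $y$ at those scales.

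The main obstacle is proving the existence of the cluster-boundary point $y$ inside every closed subset $F$ of positive measure. Because the Lebesgue density theorem forces this ``doubly bad'' point to lie in the measure-zero exceptional set of $F$, it cannot be located by purely measure-theoretic means; the argument must exploit the combinatorial structure of the construction. The delicate calibration of the cluster sizes, the gap scales and the number of subintervals at each stage is what guarantees that no closed subset $F$ of positive measure, regardless of how it selects pieces from $K$, can avoid producing such a point --- that fine-tuning, ensuring the failure of SOSD is inherited by every closed subset of $K$ and not merely by $K$ itself, is the delicate heart of the construction.
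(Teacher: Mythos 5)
Your overall plan---build a positive-measure Cantor-like set whose construction places gaps of comparable size on \emph{both} sides of every surviving interval at every scale, and then show no nonempty closed subset can be SOSD---is the same kind of construction the paper uses (its set $F_\infty$ is built from a five-interval pattern in which each kept interval of length $2t$ is flanked on both sides by removed intervals of length $6t$, iterated to high depth at each stage). However, there is a genuine gap: the entire content of the lemma is the claim you explicitly defer, namely that \emph{every} nonempty closed subset $F$ of the constructed set contains a point $y$ and scales $r_n\to 0^+$ at which both one-sided densities of $F$ stay bounded away from $1$. You acknowledge this is ``the delicate heart'' but give no mechanism for producing such a point, and the difficulty is real: the scales at which the ambient gap structure is visible depend on the point, the SOSD radii of $F$ can degenerate from point to point, and a priori each individual point might only exhibit the two-sided gap configuration at finitely many scales. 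A purely pointwise hunt for a ``doubly bad at infinitely many scales'' point, guided only by the remark that it must lie in the measure-zero set of non-density points of $F$, does not by itself converge to a proof; some uniformization over points of $F$ is needed.

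The paper supplies exactly that missing mechanism, and it works for all nonempty closed subsets at once (no case split on $|F|=0$ versus $|F|>0$ is needed). Assuming $\widetilde F\subseteq F_\infty$ is nonempty, closed and SOSD, one gets for each $x\in\widetilde F$ a radius $r_x$ below which one of the one-sided densities exceeds $0.9$; Baire's category theorem applied to the sets $\{x:r_x>1/k\}$ inside the complete space $\widetilde F$ yields an interval $(\alpha,\beta)$, which may be taken to be (the interior of) an ${\cal F}$-component of the construction with $\beta-\alpha<1/k$, on which points with the \emph{uniform} bound $r_x>1/k$ are dense in $\widetilde F\cap(\alpha,\beta)\neq\emptyset$. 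Then the hierarchical gap pattern is exploited by downward induction on the levels inside $(\alpha,\beta)$: if $\widetilde F$ met a deepest-level kept interval $[p-t,p+t]$, a point $x$ there with $r_x>1/k$ would have both densities at scale $4t<\beta-\alpha<1/k$ at most $1/2$ (because $(p-7t,p-t)$ and $(p+t,p+7t)$ are gaps), a contradiction; once level-$m$ components are excluded, the flanking intervals of level-$(m-1)$ components are free of $\widetilde F$ and the same estimate repeats, until $\widetilde F\cap(\alpha,\beta)=\emptyset$, contradicting nonemptiness. Without this uniformization-plus-induction step (or some equivalent), your outline does not yet prove the lemma; with it, your construction sketch would also need its cluster/gap calibration made explicit so that the two-sided density bound at a definite scale is available at every level.
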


\begin{proof}
In order to make the formulation of the construction easier, we introduce the following terminology: we say that the open set $G$ is the level 0 open set, in $(a,b)$ or $[a,b]$ if $G=(a,b)$. 
The open set $G$ is a level 1 open set in $(a,b)$ or $[a,b]$, if
$$G=\left(a,a+\frac{3}{11}(b-a)\right)\cup\left(a+\frac{4}{11}(b-a),a+\frac{7}{11}(b-a)\right)\cup\left(a+\frac{8}{11}(b-a),b\right),$$
that is, if we divide $(a,b)$ into five subintervals, then $G$ is the union of the middle one, the rightmost one and the leftmost one. 
We say that these intervals are the ${\cal G}$-components of $G$, while the two closed intervals forming $(a,b)\setminus G$ are the ${\cal F}$-components of $G$. 
We use this terminology even more generally: if an open set $G\subseteq[a,b]$ is the union of finitely many open intervals, then these open intervals are the ${\cal G}$-components of $G$ in $[a,b]$, while the contiguous, nondegenerate closed intervals are the ${\cal F}$-components of $G$ in $[a,b]$.
Here, of course one can use the subspace topology of $[a,b]$ for open/closed intervals.
Analogously one can consider ${\cal G}$-components and ${\cal F}$-components of $G$ in an interval $(a,b)$.

Now we define level $k$ open sets recursively. 
We say that $G$ is the level $k$ open set in $(a,b)$ or $[a,b]$, if it can be obtained by taking the level $k-1$ open set $G_0$ in $(a,b)$, then further shrinking this set by taking 
only the union of all level 1 open sets in each of the ${\cal G}$-components of $G_0$. 
We also define the levels of the ${\cal F}$-components of a level $k$ open set $G$ in $(a,b)$ or $[a,b]$: an ${\cal F}$-component is of level $m$ if it is also an ${\cal F}$-component of the level $m$ open set in $(a,b)$, but not an ${\cal F}$-component of the level $m-1$ open set in $(a,b)$.

We define $G_{\infty}\subseteq(0,1)$ as a countable union $G_{\infty}=\bigcup_{n=1}^{\infty}G_n$, where each $G_n$ is a level $l_n$ open set and $(l_n)_{n=1}^{\infty}$ is to be chosen later. 
First, let $G_1$ be a level $l_1$ open set in $(0,1)$. 
Now in each ${\cal F}$-component $[a,b]$ of $G_1$, let us define $G_{2,(a,b)}$ as a level $l_2$ open set in $[a,b]$. 
Now let
$$G_2=\bigcup_{(a,b)}(G_{2,(a,b)}\cup\{a\}\cup\{b\}),$$
where the union runs over the ${\cal F}$-components of $G_1$. 
Now $G_2$ is almost the union of finitely many level $l_2$ open sets, except for the fact that some of its building blocks are   half-open    intervals instead of being open. 
However, $G_1\cup G_2$ is open as the union of open intervals and boundary points between such open intervals. 
Thus we can define $G_3$ similarly to $G_2$ by taking the ${\cal F}$-components of $G_1\cup G_2$ in $[0,1]$, and considering level $l_3$ open sets in each of them. 
We can continue this procedure recursively to obtain the sequence of sets $(G_n)$. 
We make precise the definition of $(l_n)_{n=1}^{\infty}$ now: this sequence is chosen such that for $F_{\infty}=[0,1]\setminus G_{\infty}$ we have
% $|F|\geq\frac{1}{2}$.
$|F_{\infty}|>0$. 
It is clear that such a choice is possible as $|G_n|\to 0$ for any fixed $n$ as $l_n\to\infty$. 
We note that $F_{\infty}$ is clearly a nowhere dense, perfect set. 
We claim that $F_{\infty}$ satisfies the statement of the lemma.

To this end, assume that $\widetilde{F}\subseteq F_\infty$ is nonempty and closed, and proceeding   towards    a contradiction, suppose that it is SOSD. 
Consequently, for all $x\in \widetilde{F}$ there exists $r_x>0$ such that for any $0<r<r_x$ the density of $\widetilde{F}$ is larger than $0.9$ in $(x-r,x)$ or $(x,x+r)$. 
Now by Baire's Category Theorem there exists an interval $(\alpha,\beta)$ and some $k\in\mathbb{N}$ such that $\left\{x:r_x>\frac{1}{k}\right\}$ is dense in $(\alpha,\beta)\cap\widetilde{F}$   and $(\alpha,\beta)\cap\widetilde{F}\neq \emptyset$. 
By shrinking this interval, if needed, we can achieve that $[\alpha,\beta]$ is an ${\cal F}$-component of $\bigcup_{n=1}^{N-1}G_n$ for some $N$, and $\beta-\alpha<\frac{1}{k}$. 
Now by construction and our hypothesis, we clearly have that $\widetilde{F}'=\widetilde{F}\cap(\alpha,\beta)$ is also nonempty, closed, and SOSD. 
Thus, it would be sufficient to arrive at a contradiction with the existence of such a set. 
Now it is clear that 
$$\widetilde{F}'\subseteq (\alpha,\beta)\setminus G_{N,(\alpha,\beta)}.$$
Assume that there exists a point $x\in\widetilde{F}'$ in a level   $l_N$,  ${\cal F}$-component of $G_{N,(\alpha,\beta)}$. 
Then by the above application of Baire's Category Theorem, $x$ can be chosen such that $r_x>\frac{1}{k}$. 
Denote the ${\cal F}$-component containing $x$ by $[p-t,p+t]$. 
Then $(p-7t,p-t)\cup(p+t,p+7t)\subseteq G_{N,(\alpha,\beta)}$ by the definition of level $l_N$ open sets. 
Thus on both sides of $x$ we can find subintervals of $(\alpha,\beta)$, notably $[x-4t,x]$ and $[x,x+4t]$ such that the density of $\widetilde{F}'$, and hence the density of $\widetilde{F}$ in each of these intervals is at most $\frac{1}{2}$, as at most one interval of length $2t$ belongs to $\widetilde{F}'$ here. 
However, as we stay inside the interval $(\alpha,\beta)$, whose length is at most $\frac{1}{k}$, one of these densities should be larger than $0.9$ by assumption. 
This gives a contradiction, thus $\widetilde{F}'$ cannot have points in a level   $l_N$,  ${\cal F}$-component of $G_{N,(\alpha,\beta)}$. 
Consequently,
$$\widetilde{F}'\subseteq (\alpha,\beta)\setminus G_{N,(\alpha,\beta)}^{(1)}$$
holds in fact, where $G_{N,(\alpha,\beta)}^{(1)}$ is the level $l_N-1$ open set in $(\alpha,\beta)$. 
Now we can repeat the argument of the previous paragraph to show that $\widetilde{F}'$ cannot have points in level   $l_N-1$,  ${\cal F}$-components of $G_{N,(\alpha,\beta)}^{(1)}$, which is equivalent to not having points in level   $l_N-1$,  ${\cal F}$-components of $G_{N,(\alpha,\beta)}$. 
Proceeding by induction, we can show for any $m=1,2,...,l_N$ that
$$\widetilde{F}'\subseteq (\alpha,\beta)\setminus G_{N,(\alpha,\beta)}^{(m)}$$
holds, where $G_{N,(\alpha,\beta)}^{(  m  )}$ is the level $l_N-m$ open set in $(\alpha,\beta)$. 
However, the $m=l_N$ case means that $\widetilde{F}'$ does not have any points in $(\alpha,\beta)$, that is,  $\widetilde{F}'$ is empty. 
This gives a contradiction, which concludes the proof.
\end{proof}

%Next we discuss a consequence of Theorem \ref{unapproximable closed}

\begin{theorem}\label{bad SOSD}
There  exists  an SOSD $F_\sigma$ set which does not contain any nonempty, closed, SOSD subsets.
\end{theorem}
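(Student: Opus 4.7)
My plan is to assemble $F$ as a countable union $F=\bigcup_k F^{(k)}$ of closed sets of the type produced by Lemma~\ref{unapproximable closed}, chosen with measures tending to $1$. The union is $F_\sigma$ and of full Lebesgue measure in $[0,1]$, so is trivially SOSD; meanwhile, a Baire category argument traps any hypothetical nonempty closed SOSD subset on some small subinterval inside a single $F^{(k_0)}$, contradicting Lemma~\ref{unapproximable closed} applied to $F^{(k_0)}$.

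First I would note that the construction in the proof of Lemma~\ref{unapproximable closed} is flexible in the measure of the output: a level $m$ open set in an interval of length $\ell$ has measure $(9/11)^m\ell$, so by letting the $l_n$ grow rapidly enough one can produce, for each $k\in\N$, a closed $F^{(k)}\subseteq[0,1]$ with $|F^{(k)}|>1-2^{-k}$ and with no nonempty closed SOSD subset. Setting $F:=\bigcup_{k=1}^\infty F^{(k)}$ then gives an $F_\sigma$ set with $|F|\ge|F^{(k)}|>1-2^{-k}$ for every $k$, and hence $|[0,1]\setminus F|=0$. Consequently, $|F\cap[x-r,x]|=r=|F\cap[x,x+r]|$ for every $x\in[0,1]$ and every sufficiently small $r>0$, which makes $F$ SOSD at every point.

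To rule out nonempty closed SOSD subsets, suppose $\widetilde F\subseteq F$ is such a subset. Viewing $\widetilde F$ as a Baire space (closed subspace of $\R$) and decomposing $\widetilde F=\bigcup_k(\widetilde F\cap F^{(k)})$ into countably many relatively closed sets, some $\widetilde F\cap F^{(k_0)}$ must have nonempty relative interior, producing $x\in\widetilde F$ and $\eps>0$ with $\widetilde F\cap(x-\eps,x+\eps)\subseteq F^{(k_0)}$. Since $F^{(k_0)}$ is nowhere dense (any interval inside it would be a closed SOSD subset, contradicting its defining property), we can pick $\eps'\in(0,\eps)$ with $x\pm\eps'\notin\widetilde F$. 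Then $\widehat F:=\widetilde F\cap[x-\eps',x+\eps']$ is nonempty, closed, contained in $F^{(k_0)}$, and every point of $\widehat F$ lies in the interior of $[x-\eps',x+\eps']$, so for each $y\in\widehat F$ and sufficiently small $r$ the density computations for $\widehat F$ at $y$ agree with those for $\widetilde F$; hence $\widehat F$ inherits SOSD, contradicting the defining property of $F^{(k_0)}$. The main obstacle is precisely this last localization step—truncating $\widetilde F$ to a closed interval without losing SOSD at the cut-points—and it is resolved by exploiting the nowhere-density of $F^{(k_0)}$ to shift the endpoints off $\widetilde F$.
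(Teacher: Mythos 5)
Your proposal is correct, but it reaches the contradiction by a genuinely different route than the paper. Both proofs build $F$ as a countable union of sets from Lemma~\ref{unapproximable closed} whose union has full measure (so the $F_\sigma$ and SOSD parts coincide, and both implicitly rely on the observation that the construction behind Lemma~\ref{unapproximable closed} can be made of measure arbitrarily close to $1$, exactly as you note via the factor $(9/11)^{m}$). The paper, however, takes the pieces pairwise disjoint and, given a hypothetical nonempty closed SOSD subset $F'$, first shows that the non-SOSD points of each piece $F'\cap F_n$ are dense in that piece, and then runs a recursive nested-interval construction: points $x_n\in F'$ are chosen in shrinking intervals kept at positive distance from the earlier pieces, so the limit $x^*$ lies in no $F_n$, contradicting closedness of $F'$. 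You instead apply Baire's theorem directly to the complete metric space $\widetilde F$, find $k_0$, $x$ and $\varepsilon$ with $\emptyset\neq\widetilde F\cap(x-\varepsilon,x+\varepsilon)\subseteq F^{(k_0)}$, and then perform a single localization: cutting with $[x-\varepsilon',x+\varepsilon']$ whose endpoints are pushed off the nowhere dense set $\widetilde F\cap(x-\varepsilon,x+\varepsilon)$ keeps every point of the truncated set in the open interval, and since SOSD is a local property the truncation remains SOSD, contradicting the defining property of the single set $F^{(k_0)}$. The two arguments share the same localization trick (the paper uses it to establish the density of non-SOSD points, you use it to truncate $\widetilde F$), but yours replaces the hand-rolled recursion by one invocation of Baire's theorem, requires no disjointness of the $F^{(k)}$, and is shorter, while the paper's version makes the density of non-SOSD points explicit and derives its contradiction from closedness rather than from Lemma~\ref{unapproximable closed} applied to one piece. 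Two cosmetic points: at the endpoints $0$ and $1$ only one of the two one-sided densities equals $1$ (which is all SOSD requires, so your displayed equality should be read with that caveat), and the choice of $\varepsilon'$ must avoid $\widetilde F$ on both sides simultaneously, which is fine because the union of two nowhere dense closed subsets of $(0,\varepsilon)$ is still nowhere dense.
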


\begin{proof}
By Lemma \ref{unapproximable closed} we can take countably many closed sets $( F_n )_{n=1}^\infty$ such that
they do not contain any nonempty, closed, SOSD subsets,
they are pairwise disjoint and
their union $F$ is of full measure in $\R$.
Thus $F$ is SOSD and $F$ is $F_\sigma$.

Let $F'$ be a  nonempty, closed  SOSD subset of $F$. 
Set $F'_n := F'\cap F_n$ for every $n\in\N$.
It is clear that the $F'_n$s are nowhere dense and none of them contains a nonempty, closed, SOSD set.
This implies that those points of $F'_n$ at which $F'_n$ is not SOSD form a dense subset of $F'_n$.

We define sequences $(x_n)_{n=1}^\infty$ in $\R$, $(m_n)_{n=1}^\infty$ in $\N$ and closed intervals $(I_n)_{n=1}^\infty$ such that $x_n\in F'_{m_n}$, the set $F'_{m_n}$ is not SOSD at $x_n$ and $\inte(I_n)$, the interior of $I_n$ is a neighbourhood of $x_n$.
Set $m_1:=1$, take an $x_1\in F'_1$ such that $F'_1$ is not SOSD at $x_1$ and let $I_1 := [x_1-1,x_1+1]$.
We proceed by recursion.
Suppose that $n>1$ and we have defined $m_i$, $x_i$ and $I_i$    so that  for every  $i \in  \{1,\ldots,n-1\}$ we have
\begin{align*}
 x_i \in F_{m_i}',  &\ F'_{m_i} \mbox{ is not SOSD at }x_i , \mbox{ diam }(I_i) \leq  2/i,  \text{ and}\\
&  x_i \in \mbox{int}(I_i)\subset I_{i-1}, \ \ \text{ when }i>1.
\end{align*}
As $\Union_{k=1}^{m_{n-1}-1} F'_k$ is closed and it does not contain $x_{n-1}$, 
we can take a closed interval $I_n\subset I_{n-1}$  such that $x_{n-1}\in \inte(I_{n})$,
\begin{equation}\label{away}
d\big(I_n,\Union\nolimits_{k=1}^{m_{n-1}-1} F'_k\big) > 0
\end{equation}
and $\diam (I_n)  \leq 2/n $. 
Since $F'$ is SOSD, but $F'_{m_{n-1}}$ is not SOSD at $x_{n-1}$, there is an $m_n\in\N\cap (m_{n-1},\infty)$ for which $|F'_{m_n}\cap I_n|>0$.
 Using the fact that those points of $F'_{m_n}$ at which $F'_{m_n}$ is not SOSD form a dense subset of $F'_{m_n}$ 
we can take an $x_n\in F'_{m_n}\cap \inte(I_{n-1})$ such that $F'_{m_n}$ is not SOSD at $x_n$.

As $x_n\in I_n$ for every $n\in\N$ and $\lim_{n\to\infty} \diam(I_n) = 0$, we obtain that there is a unique element $x^*$ of $\bigcap_{n=1}^\infty I_n$ and $\lim_{n\to\infty} x_n = x^*$. By \eqref{away}, we have $x^*\notin F'$. This implies that $F'$ is not closed, which concludes the proof.
\end{proof}

Let us observe the obvious fact that the $F_\sigma$ set guaranteed by the above theorem cannot be written as the union of countably many SOSD closed sets. Paired with Theorem \ref{char_thm},  this  immediately implies the following corollary:
\begin{corollary}
  There exists an SOSD $F_\sigma$ set which is not $\lip 1$.
\end{corollary}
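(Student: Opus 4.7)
The plan is a one-step contradiction combining Theorem \ref{bad SOSD} with the forward direction of Theorem \ref{char_thm}. Let $F$ be the SOSD $F_\sigma$ set produced by Theorem \ref{bad SOSD}; by construction $F$ contains no nonempty, closed, SOSD subset, and from the construction in that proof $F$ has full Lebesgue measure in $\R$ (being the disjoint union of the positive-measure closed sets $F_n$), so in particular $F \neq \emptyset$.

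I would assume, towards a contradiction, that $F$ is $\lip 1$. Theorem \ref{char_thm} then yields a decomposition $F = \bigcup_{n=1}^\infty E_n$ in which each $E_n$ is closed and strongly one-sided dense. Since $F$ is nonempty, at least one index $n_0$ must yield $E_{n_0} \neq \emptyset$. This $E_{n_0}$ is then a nonempty, closed, SOSD subset of $F$, directly contradicting the defining property of $F$ supplied by Theorem \ref{bad SOSD}.

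There is no substantive obstacle in this final step: both pieces of input — the existence of the pathological $F_\sigma$ set and the characterization of $\lip 1$ sets — have already been established, and the present deduction is precisely the observation the authors record just before the corollary. The proof is a formal one-liner that inserts Theorem \ref{char_thm} into the conclusion of Theorem \ref{bad SOSD}.
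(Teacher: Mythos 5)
Your argument is correct and is essentially the paper's own: the authors also observe that the set from Theorem \ref{bad SOSD} cannot be a countable union of closed SOSD sets (any such union of a nonempty set forces a nonempty closed SOSD piece) and then invoke the necessity direction of Theorem \ref{char_thm}. Your explicit handling of the possibility of empty $E_n$'s is a harmless refinement of the same one-line deduction.
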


\end{document}